\documentclass[psamsfonts, reqno]{amsart}

\usepackage{amscd,amsmath,amssymb,amsfonts,verbatim}
\usepackage{times}
\usepackage[cmtip, all]{xy}
\usepackage{graphicx}
\usepackage[active]{srcltx}
\usepackage{color}
\usepackage{textcomp}

\definecolor{refkey}{gray}{.5}   
\definecolor{labelkey}{gray}{.5} 
\definecolor{Red}{rgb}{1,0,0}

\newcommand{\ra}{\rightarrow}

\newcommand{\surj}{\ra\!\!\!\ra}	
\newcommand{\ol}{\overline}

\newtheorem{theorem}{Theorem}[section]

\newtheorem{lemma}[theorem]{Lemma}

\theoremstyle{definition}

\newtheorem{example}[theorem]{Example}
\newtheorem{definition}[theorem]{Definition}

\newcommand{\CA}{\mbox{$\mathcal A$}}

	\newcommand{\CR}{\mbox{$\mathcal R$}}

\newcommand{\MI}{\mbox{$\mathfrak I$}}

\newcommand{\Spec}{\mbox{\rm Spec\,}}	\newcommand{\hh}{\mbox{\rm ht\,}}
	
\newcommand{\Aut}{\mbox{\rm Aut\,}}

\newcommand{\Um}{\mbox{\rm Um}}		
		
	\newcommand{\E}{\mbox{\rm E}}

\title{Splitting of projective modules over some affine algebras}
\author{Subarna Mandal}
\newcommand{\Addresses}{{
  \bigskip
  \footnotesize

\textsc{Subarna Mandal, Statistics and Mathematics Unit,Indian Statistical Institute,203 BT Road,
            Kolkata,700108 INDIA}\par\nopagebreak
  \textit{E-mail:} Subarna Mandal \texttt{<subarnamandal23@gmail.com>}

\medskip

  }}

\begin{document}
\maketitle

 \keywords {Keywords:}~ {Projective modules, affine algebra, unimodular elements.}

 \begin{abstract}
 This article focuses on defining the Euler class group of stably free module and analyzes the condition for existence of unimodular element in corresponding projective  module.This is an improvisation of work by Banerjee and Das( On a question of Nori:Obstructions,improvements and appliaction). If $A$ is a smooth affine algebra over $\overline{F}_p$ and $dim(A)=d\geq 3$ and $n$ be an integer such that $2n\geq d+2$ and P be a projective module of rank $n$ then they defined Euler class group of $P$ and proved that Euler class of $P$ is trivial iff $P$ contains a unimodular element.
We generalize the result to accomodate any stably free module of rank n.

\end{abstract}

\section{Introduction}

The existence of unimodular elements in projective modules is a fundamental problem in the theory of projective modules over commutative rings. Let $R$ be a commutative Noetherian ring and let $P$ be a finitely generated projective $R$ module. An element $p\in P$ is said to be unimodular if there exists an $R$-linear homomorphism $\phi:P\surj R$ such that $\phi(p)=1$. The existence of a unimodular element in $P$ is equivalent to the existence of a free direct summand of rank one; that is,
$P=Q\oplus R$ for some projective $R$-module $Q$. Thus, the problem of determining whether a projective module admits a unimodular element is closely related to the splitting problems for projective modules.

A projective module $P$ of rank $n$ is called stably free if  $P\oplus R^m= R^{n+m}$
for some non-negative integer $m$. The study of unimodular elements in stably free modules has attracted considerable attention.Consequently it is natural to seek algebraic obstructions to the existence of a unimodular element in a stably free module.

Euler class groups provide an important framework for studying this obstruction. For a suitable oriented projective $R$-module $P$ of rank $n$, one can associate an Euler class

$e(P)\in E^n(R)$
The Euler class measures, in an appropriate sense, the obstruction to the existence of a unimodular element of $P$. 
The relationship between the dimension of the underlying ring and the rank of the projective module is particularly important in this article. Various results concerning Euler classes and unimodular elements have been established under stable-range conditions relating $n$ and $d=\dim R$. In particular, inequalities of the form
$2n\geq d+2$ play an important role in the construction and application of the $n$-th Euler class group.

In this direction, Keshari and Tikader studied the existence of unimodular elements in projective modules of rank $n$.Their work gives important results concerning projective modules over polynomial rings and establishes unimodular-element criteria under suitable hypotheses on the rank and dimension. These results provide a significant contribution to the study of the splitting problem for projective modules.

On the Euler-class side, Keshari studied the relationship between Euler classes and unimodular elements of stably free modules. In an appropriate stable range, he showed that the vanishing of the Euler class of a stably free projective module is equivalent to the existence of a unimodular element. More recently, Banerjee and Das developed the $n$th Euler class group in the setting of affine algebras over an algebraically closed field of positive characteristic and applied it to stably free modules arising from unimodular rows. Their results further demonstrate the effectiveness of Euler classes in detecting the existence of unimodular elements.

These developments naturally lead to the study of Euler classes associated with projective modules represented by more general unimodular matrices. In particular, let
$
\sigma\in Um_{r,n+r}(R)
$
be a unimodular matrix, and let $p$ denote the projective module associated with $\sigma$ according to the construction described below. It is natural to ask whether the Euler class associated with $p$ completely determines the existence of a unimodular element of $P$.

The purpose of the present article is to investigate this question. Let $R$ be a ring of dimension $d$ containing a field $K$, and let $n$ be an integer satisfying
$2n\geq d+2$
For the projective module $P$ associated with the unimodular matrix $\sigma$, we study its Euler class
$e(P)\in E^n(R)$
Our main result establishes the following equivalence:
$P$ has unimodular elements means $e(P)=0\in E^n(R)$.

Thus, under the hypotheses of the theorem, the $n$-th Euler class gives a complete obstruction to the existence of a unimodular element in $P$. In particular, the result provides a criterion for the splitting of a free direct summand of $P$ in the admissible rank range $2n\geq d+2$.

The paper is organized as follows. In Section 2, we recall the necessary definitions and preliminary results concerning unimodular matrices, projective modules, orientations, and Euler class groups. In Section 3, we construct the Euler class associated with the projective module determined by $\sigma$ and establish the required properties. In Section 4, we prove the main theorem. Finally, in Section 5, we discuss the consequences and applications of the main result.1. Introduction

The existence of unimodular elements in projective modules is a fundamental problem in the theory of projective modules over commutative rings. Let $R$ be a commutative Noetherian ring and let $P$ be a finitely generated projective $R$-module. An element $p\in P$ is said to be unimodular if there exists an $R$-linear homomorphism $\varphi:P\rightarrow R$ such that $\varphi(p)=1$. The existence of a unimodular element in $P$is equivalent to the existence of a free direct summand of rank one; that is,
$P\simeq Q\oplus R$
for some projective $R$-module $Q$. Thus, the problem of determining whether a projective module admits a unimodular element is closely related to the splitting and cancellation problems for projective modules.

A projective module $P$ of rank $n$ is called stably free if
$P\oplus R^m\simeq R^{n+m}$for some non-negative integer $m$. The study of unimodular elements in stably free modules has attracted considerable attention. Although a stably free module becomes free after adjoining a suitable number of free summands, it need not be free itself. Consequently, it is natural to seek algebraic obstructions to the existence of a unimodular element in a stably free module.

Euler class groups provide an important framework for studying this obstruction. For a suitable oriented projective $R$-module $P$ of rank $n$, one can associate an Euler class
$e(P)\in E^n(R)$
The Euler class measures, in an appropriate sense, the obstruction to the existence of a unimodular element of $P$. The theory of Euler class groups, developed through the work of Bhatwadekar and Sridharan and subsequent contributions by several authors, has provided powerful methods for studying projective and stably free modules.

The relationship between the dimension of the underlying ring and the rank of the projective module is particularly important in this theory. Various results concerning Euler classes and unimodular elements have been established under stable-range conditions relating $n$ and $d=\dim R$. In particular, inequalities of the form
$2n\geq d+2$
play an important role in the construction and application of the $n$-th Euler class group.

In this direction, Keshari and Tikader studied the existence of unimodular elements in projective modules of rank $n$ using monic inversion techniques. Their work gives important results concerning projective modules over polynomial rings and establishes unimodular-element criteria under suitable hypotheses on the rank and dimension. These results provide a significant contribution to the study of the splitting problem for projective modules.

On the Euler-class side, Keshari studied the relationship between Euler classes and unimodular elements of stably free modules. In an appropriate stable range, he showed that the vanishing of the Euler class of a stably free projective module is equivalent to the existence of a unimodular element. More recently, Banerjee and Das developed the $n$-th Euler class group in the setting of affine algebras over an algebraically closed field of positive characteristic and applied it to stably free modules arising from unimodular rows. Their results further demonstrate the effectiveness of Euler classes in detecting the existence of unimodular elements.

These developments naturally lead to the study of Euler classes associated with projective modules represented by more general unimodular matrices. In particular, let
$\sigma\in Um_{r,n+r}(R)$
be a unimodular matrix, and let $P$ denote the projective module associated with $\sigma$ according to the construction described below. It is natural to ask whether the Euler class associated with $P$ completely determines the existence of a unimodular element of $P$ in the stable range.

The purpose of the present paper is to investigate this question. Let $R$ be a ring of dimension $d$ containing a field $K$, and let $n$ be an integer satisfying
$2n\geq d+2$.
For the projective module $p$ associated with the unimodular matrix $\sigma$, we study its Euler class
$e(P)\in E^n(R)$
Our main result establishes the following equivalence:
$P\text{ has a unimodular element}
\quad\Longleftrightarrow\quad
e(P)=0\in E^n(R)$

Thus, under the hypotheses of the theorem, the $n$-th Euler class gives a complete obstruction to the existence of a unimodular element in $P$. In particular, the result provides a criterion for the splitting of a free direct summand of $P$ in the admissible rank range $2n\geq d+2$.

The paper is organized as follows. In Section 2, we recall the necessary definitions and preliminary results concerning unimodular matrices, projective modules, orientations, and Euler class groups. In Section 3, we construct the Euler class associated with the projective module determined by $\sigma$ and establish the required properties. In Section 4, we prove the main theorem. Finally, in Section 5, we discuss the consequences and applications of the main result.

\section{Preliminaries}
Before going to the next result we recall the following definitions.

Let $A$ be a ring with $1$ and $P$ be a projective $A$-module.
\begin{enumerate}

        \item A row vector $(a_0,...,a_n)\in A^{n+1}$ of length $n+1$ is said to be a unimodular row of length $n+1$, if there exists $(b_0,...,b_n)\in A$ such that $\sum_{i=0}^{n}a_ib_i=1$. We will denote $\Um_{n+1}(A)$ as the set of all unimodular row vectors of length $n+1$ over the ring $A$.
		\item The stable range (denoted by $sr(A)$) of $A$ is the smallest natural number $r$ with the property that for any $(u_1,...,u_r)\in \Um_r(A)$, there exist $\lambda_i\in A$ such that $(u_1+\lambda_1u_{r+1},...,u_r+\lambda_ru_{r+1})\in Um_r(A)$.
		\item $P$ is said to have a unimodular element if there exists an $A-$linear surjection $\phi:P\surj A$. In this case an element $v\in P$ is said to be a unimodular element of $P$ if $v\in \phi^{-1}(1)$. The set of all unimodular elements of $P$ is denoted by $\Um(P)$.
	\item We say that the projective stable range of $A$ (notation: $psr(A)$) is $n$ if $n$ is the least positive integer such that for any projective $A-$module $P$ of rank $n$ and $(p,a)\in \Um(P\oplus A)$, there exists $q\in P$ such that $p+aq\in \Um(P)$.
	\item Let $P$ be such that $\Um(P)\not=\phi$. We choose $\phi\in P^*$ and $p \in P$ such that $\phi(p)=0$. We define an endomorphism $\phi_p$ as the composite $\phi_p:P\to A\to P$, where $A\to P$ is the map sending $1\to p.$ Then by a transvection we mean an automorphism of $P$, of the form $1+\phi_p$, where either $\phi\in \Um(P^*)$ or $p\in \Um(P)$. By $\E(P)$  we denote  the subgroup of $\Aut(P)$ generated by all transvections.

\end{enumerate}

\section{The Euler class group and the Euler class}

Let $A$ be an affine algebra over $\overline{\mathbb{F}}_p$ such that $\dim(A)=d.$ Henceforth we shall assume that $d\ge 3$ and $n$ is an integer satisfying $2n\ge d+2$.
\par Let $J$ be an ideal of height $n$ such that $J/J^2$ is generated by $n$ elements. Let $\alpha$ and $\beta$ be two surjections from $(A/J)^n$ to $J/J^2.$ We say that $\alpha$ and $\beta$ are related if there exists an elementary automorphism $\sigma $ of $(A/J)^n$ such that $\alpha\sigma=\beta$. This is an equivalence relation on the set of surjections from $(A/J)^n$ to $J/J^2$. Let $[\alpha]$ denote the equivalence class of $\alpha$. If $(\ol{a}_1,...,\ol{a}_n)$ generate $J/J^2$, we obtain a surjection $\alpha:(A/J)^n\surj J/J^2$, sending $\ol{e}_i $ to $\ol{a}_i$. We say $[\alpha]$ is given by the set of generators $\ol{a}_1,...,\ol{a}_n$ of $J/J^2$.

\par 
\begin{definition}
Let $G$ be the free abelian group on the set $B$ of pairs $(J,\omega_J)$, where:\\
$(i)$ $J\subset A$ is an ideal of height $n$,\\
$(ii)$ $\Spec(A/J)$ is connected,\\ 
$(iii)$ $J/J^2$ is generated by $n$ elements and \\
$(iv)$ $\omega_J:(A/J)^n\surj J/J^2$ is an equivalence class of surjections $\alpha:(A/J)^n\surj J/J^2$.

\par Let $J\subset A$ be a proper ideal. Then $J=J_1\cap J_2\cap ...\cap J_r$, where $J_i$'s are proper, pairwise co-maximal and $\Spec(A/J_i)$ is connected for all $i=1,...,r$. It was proved in \cite[Lemma 4.1]{BR3} that such a decomposition is unique. We shall say that $J_i$ are the connected components of $J$.
\par Let $J\subset A$ be an ideal of height $n$ such that $J/J^2$ is generated by $n$ elements. Let $J=\cap J_i$ be the decomposition of $J$ into its connected components. Then note that  for every $i$, $\hh(J_i)=n$ and by the Chinese Remainder Theorem  $J_i/J_i^2$ is generated by $n$ elements. Let $\omega_J:(A/J)^n\surj J/J^2$ be a surjection. Then in a natural way $\omega_J$ gives rise to surjections $\omega_{J_{i}}:(A/J_i)^n\surj J/{J_i}^2$. We associate to the pair $(J,\omega_J)$, the element $\sum (J_i,\omega_{J_i})$ of $G$.
\par Let $H$ be the subgroup of $G$ generated by the set $S$ of pairs $(J,\omega_J)$, where  $\omega_J:(A/J)^n\surj J/J^2$ has a surjective lift and denote it by $\theta:A^n\surj J $. Then we define the quotient group $G/H$ as the $n$-th Euler class group of $A$ and denote it by ${E^n(A)}$.
\end{definition}
\begin{theorem}
Let $A$ be an affine algebra over  $\overline{\mathbb{F}}_p$ of dimension $d$ and $n$ be an integer satisfying $2n\ge d+2$. Let $I\subset A$ be an ideal of height $n$ be such that $I/I^2$ is generated by $n$ elements and $\omega_I:(A/I)^n\surj I/I^2 $ be an equivalence class of surjections. Suppose that the image of $(I,\omega_I)$ is zero in the Euler class group $E^n(A)$ of $A$. Then $I$ is generated by $n$ elements and $\omega_I$ can be lifted to a surjection $\theta:A^n\surj I$
\end{theorem}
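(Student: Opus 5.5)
We follow the standard Bhatwadekar--Sridharan strategy for the "vanishing implies lift" theorem for Euler class groups, adapted to the present setting. The only inputs we take from the literature are the two basic moving results valid for affine algebras over $\overline{\mathbb{F}}_p$ with $2n\ge d+2$, as established by Banerjee--Das. The first is the \emph{moving lemma}: given $(J,\omega_J)$ and any ideal $K$ of height $\ge n$, there is $(J',\omega_{J'})$ with $J'$ comaximal with $K$ and $(J,\omega_J)+(J',\omega_{J'})=0$ via a surjection $A^n\surj J\cap J'$. The second is the \emph{addition/subtraction principle}: if two of $(J,\omega_J)$, $(J',\omega_{J'})$, $(J\cap J',\omega_{J\cap J'})$, with $J+J'=A$, lift to surjections from $A^n$, so does the third. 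Both rest on Swan's Bertini theorem and on the fact that $E(A^n)$ acts transitively on lifts when $2n\ge d+2$ (via $psr$ and the Suslin--Vaserstein estimates over $\overline{\mathbb F}_p$).

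\textbf{Step 1 (reduce the relation in $G$).} Since $(I,\omega_I)\in H$, we can write in $G$
\[
(I,\omega_I)+\sum_{k}(K_k,\omega_{K_k})=\sum_{l}(L_l,\omega_{L_l}),
\]
where each $(K_k,\omega_{K_k})$ and each $(L_l,\omega_{L_l})$ lies in $S$ (possibly after decomposing into connected components). Using the moving lemma repeatedly, we replace each generator of $S$ by a comaximal representative. We then use the subtraction principle to move elements of $S$ across; since the result is still in $S$, nothing changes. The outcome is that we may assume all ideals occurring are pairwise comaximal except where they coincide. By freeness of $G$ on connected pairs, the connected components of $I$ then appear among those of a single ideal $L=\bigcap L_l$, with equal orientations. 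After cancellation this yields the key reduced form
\[
(I,\omega_I)+(K,\omega_K)=(L,\omega_L),
\]
with $I+K=A$, $L=I\cap K$, $\omega_L$ induced by $\omega_I$ and $\omega_K$, and both $(K,\omega_K)$ and $(L,\omega_L)$ in $S$. This bookkeeping, which combines sums of elements of $S$ into one element of $S$ via the addition principle applied inductively to comaximal pieces, is routine but must be done carefully.

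\textbf{Step 2 (subtraction).} We apply the subtraction principle to $L=I\cap K$. The surjection $A^n\surj L$ lifts $\omega_L$, and the surjection $A^n\surj K$ lifts $\omega_K$. The conclusion is that $\omega_I$ lifts to a surjection $\theta:A^n\surj I$; in particular $I$ is generated by $n$ elements.

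\textbf{Main obstacle.} The crux is the subtraction principle itself in this generality, where $n$ may be as small as $(d+2)/2$ rather than $d$. Its proof requires showing that, for $\theta_L:A^n\surj L$ and $\theta_K:A^n\surj K$ compatible modulo $K^2$, one can alter $\theta_L$ by an element of $E(A^n)$ so that it agrees with $\theta_K$ modulo $K^2$. Here I would use that $\mathrm{SL}_n(A/K)\cap$ (stabilizer data) is covered by $E_n(A/K)$, since $\dim A/K\le d-n\le n-2$ and $\overline{\mathbb{F}}_p$-algebras satisfy $SK_1$-vanishing in this range (Vaserstein--Suslin). I would then pass to $A[T]$ and use a patching argument (Quillen--Suslin type, as in Bhatwadekar--Sridharan) to deform the ideal $L$ to $I$ while keeping $n$ generators. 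If the full $A[T]$ argument is too heavy, my first fallback is to cite the Banerjee--Das subtraction principle directly, since the hypotheses ($\overline{\mathbb F}_p$, $2n\ge d+2$, $d\ge3$) coincide with theirs.
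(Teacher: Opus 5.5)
Your inputs match the paper's (the moving lemma and the addition and subtraction principles), and Step 2 is fine. Step 1, however, is where all the content lies once those principles are granted, and it does not work as described.

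The relation $(I,\omega_I)=\sum_i a_i-\sum_j b_j$ with $a_i,b_j\in S$ is an identity in the free group $G$. So you cannot ``replace a generator of $S$ by a comaximal representative''. The moving lemma applied to $a=(J,\omega_J)\in S$ only gives the identity $a=(J\cap J',\omega_{J\cap J'})-(J',\omega_{J'})$, with both terms in $S$ by subtraction, and this reintroduces $J$. Overlapping terms among the $a_i$ (or among the $b_j$) therefore never disappear. By overlapping terms I mean distinct non-comaximal ideals, or equal ideals with different orientations. As a result, the addition principle cannot merge them into single elements $(L,\omega_L)$ and $(K,\omega_K)$ of $S$.

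In fact your intermediate claim, ``we may assume all ideals occurring are pairwise comaximal except where they coincide'', is false for general elements of $H$. Take $J_1\neq J_2$ connected, non-comaximal, of height $n$ and generated by $n$ elements, e.g.\ $(x_1,\dots,x_n)$ and $(x_1,\dots,x_{n-1},x_n-x_d)$ in $\overline{\mathbb F}_p[x_1,\dots,x_d]$ with $n<d$. Then $(J_1,\omega_1)+(J_2,\omega_2)\in H$, and every expression of it in $G$ involves both $J_1$ and $J_2$. So any valid reduction must use the fact that $(I,\omega_I)$ itself has pairwise comaximal components. For that particular element, the existence of your reduced form $(I,\omega_I)+(K,\omega_K)=(L,\omega_L)$ is, given the subtraction principle, equivalent to the theorem itself.

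The missing ingredient is the combinatorial statement ``a nicely reduced element of $H$ lies in $S$''. The paper imports this as Lemma 3.3 (from Keshari's thesis) and only verifies its three hypotheses:
\begin{enumerate}
\item elements of $S$ are nicely reduced;
\item two-out-of-three holds for nicely reduced $x,y,x+y$ (addition and subtraction principles);
\item the moving lemma holds with the genericity clause that $y+e_k$ stays nicely reduced.
\end{enumerate}
You should either cite that lemma and check these hypotheses, or prove it. A proof works by introducing generic auxiliary elements via (iii) and applying two-out-of-three repeatedly, not by a one-shot replacement of terms. A typical step: from $u+w,\ u+y,\ w+z\in S$ with $y,z$ generic, adding $u+y$ and $w+z$ and then subtracting $u+w$ gives $y+z\in S$.

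When you do this, remember that the addition and subtraction principles need the supports to be comaximal, not merely distinct. That is the notion of ``nicely reduced'' the argument must actually use.

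Finally, what you call the main obstacle, the subtraction principle, is used by the paper as a known input, so citing Banerjee--Das for it is consistent with the paper. The real work for this theorem is the bookkeeping you labelled routine.
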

To prove Theorem , we shall need the following lemma whose proof can be found in \cite[lemma4.1]{MPHIL}

\begin{lemma}
Let $F$ be a
free abelian group with basis $\{e_i\}_{i\in\MI} $ and $\sim$ be an equivalence relation on $\{e_i\}_{i\in \MI}$. Define $x \in F$ to
be ``reduced'' if $x = e_1 + . . . + e_r$ and $e_i\not= e_j$ ($i \not= j$). For $x\in F$ with $x = e_1 + . . . + e_r$ we define ``support" of $x$ to be the set $supp(x) = \{e_1,...,e_r\}$. Define, $x \in F$ to be ``nicely
reduced' if $x = e_1 + . . . + e_r$ and $e_i\not= e_j$ ($i \not= j$) and such that no $e_i$ belongs to the equivalence class of other $e_j$ for $i,j=1,...,r$ and $i \not= j$. Let $S \subset F$
be such that:
\begin{enumerate}

\item Every element of $S$ is nicely reduced;
\item  Let $x,y\in F$ such that $x, y$ and $ x + y$ are nicely reduced. Then if any two of $x,y,x+y$ are in $S$, then so is the third one;
\item  $x \in F$ and $x \not\in  S$ be nicely reduced. Let $K \subset \MI$ be a finite set. Then there exists $y \in  F$ such that:
\begin{enumerate}

\item $y$ is nicely reduced;
\item $x + y \in S$;
 \item  $y + e_k$ is nicely reduced for all $k \in K$.
\end{enumerate}

\end{enumerate}
Let $ H$ be a subgroup of $F$ generated by the set $S$ . Then if $x \in H$ is nicely reduced then $x \in S$.

\end{lemma}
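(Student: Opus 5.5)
The plan is to argue by induction on the length of an expression of $x$ as a signed sum of elements of $S$, using condition (3) to move supports into ``general position'' and condition (2) to merge or cancel terms. Since $H$ is generated by $S$, every $x\in H$ can be written as
\[
x=s_1+\dots+s_k-t_1-\dots-t_l,\qquad s_i,t_j\in S .
\]
Among all such expressions for a fixed nicely reduced $x$ I would choose one with $k+l$ minimal and induct on $k+l$. The base cases are straightforward.
\begin{itemize}
\item If $k+l=0$, then $x=0$. Taking any $s\in S$ together with $y=0$ in (2) gives $0\in S$.
\item If $k=1$ and $l=0$, then $x=s_1\in S$.
\item If $k=0$, then $x=-t_1-\dots-t_l$ with $l\ge 1$. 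Every element of $S$ is nicely reduced, so it is a sum of basis elements with coefficients $+1$. Hence $x$ has all its coefficients $\le 0$, and it can be nicely reduced only if $x=0$.
\end{itemize}

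The main device is a ``generic replacement''. Suppose $x\notin S$. Let $K\subset\MI$ be the finite set of all indices occurring in $supp(x)$ and in the supports of all the $s_i,t_j$. Condition (3) then gives a nicely reduced $y$ with $x+y\in S$ and $y+e_k$ nicely reduced for every $k\in K$. In other words, $supp(y)$ is disjoint from, and pairwise inequivalent to, every basis element appearing so far. Because $x$, $y$ and $x+y$ are all nicely reduced and $x+y\in S$, condition (2) shows that $x\in S$ if and only if $y\in S$. So it suffices to show $y\in S$, and we know
\[
y=(x+y)-s_1-\dots-s_k+t_1+\dots+t_l .
\]
Because $y$ avoids everything in $K$, the basis elements of $x+y$ coming from $supp(x)$ must cancel against the $s_i,t_j$. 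What survives is exactly $supp(y)$.

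Next I would use (2) repeatedly to shorten such expressions. The first tool is additivity: if $a,b\in S$ have disjoint and mutually inequivalent supports, then $a+b$ is nicely reduced, and (2) gives $a+b\in S$. The second is subtraction: if $a\in S$, $b\in S$ and $a-b$ is nicely reduced (that is, $supp(b)\subset supp(a)$), then (2) applied to $a-b$, $b$ and $a$ gives $a-b\in S$. The aim is to combine $(x+y)$ with the $-s_i$ and $+t_j$ step by step. At each step a pair of terms is replaced by a single element of $S$, or one new generic element is introduced by (3) as above, which is possible because the supports introduced by (3) avoid all earlier ones. This produces an expression for $y$ (or for a translate of it) with fewer than $k+l$ terms, and the induction hypothesis then closes the argument.

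The main obstacle is this shortening step. The partial sums $(x+y)-s_1$, and so on, need not be nicely reduced: supports may overlap only partially, or two basis elements may be equivalent without being equal. In that case (2) cannot be applied directly. To deal with this, I would first use (3) on each offending term to replace it by a term whose support is generic relative to all the others. Each replacement changes the term only by elements of $S$ with fresh supports, so every intermediate sum becomes nicely reduced. I would then check carefully that this bookkeeping really strictly decreases $k+l$, and does not merely permute the terms.
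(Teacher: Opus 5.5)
The paper gives no proof of this lemma; it cites \cite[Lemma~4.1]{MPHIL}. So your proposal has to stand on its own, and as written it has a genuine gap. The routine parts are fine, but the ``shortening step'' is the entire content of the lemma, and the mechanism you suggest for it cannot work, for three reasons.
\begin{enumerate}
\item Condition (3) applies only to nicely reduced elements \emph{outside} $S$. The offending terms $s_i,t_j$ lie in $S$, so (3) tells you nothing about them.
\item The identity $x=\sum s_i-\sum t_j$ holds literally in the free group $F$. Suppose two positive terms share a basis element $e$, or contain two distinct equivalent ones. Then no rewriting that only adds elements of $S$ with fresh supports can make the positive part nicely reduced, because the old support must still appear in the rewritten terms. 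The coefficient $2$ at $e$, or the equivalent pair, survives; $e$ has to be cancelled against a negative term.
\item $k+l$ is the wrong induction measure. Your first move, passing from $x$ to the generic $y$, raises it to $k+l+1$, and the moves that actually make progress replace two terms by two terms. The detour through $y$ is not needed at all.
\end{enumerate}

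The missing idea is a cancellation move at a single basis element, together with a different measure.
\begin{itemize}
\item \emph{Set-up.} Suppose $s_i=e+a$ and $t_j=e+b$ share the basis element $e$.
\item \emph{Case $e\in S$.} Condition (2) gives $a,b\in S$, and you replace $s_i,t_j$ by $a,b$.
\item \emph{Case $e\notin S$: choices.} Apply (3) three times, each time generic with respect to everything present so far:
\begin{itemize}
\item to $e$, giving $\bar e$ with $e+\bar e\in S$ (so $\bar e\notin S$);
\item to $\bar e$, giving $c$ with $\bar e+c\in S$;
\item to $e$ again, giving $d$ with $e+d\in S$.
\end{itemize}
\item \emph{Case $e\notin S$: deductions.} Two applications of (2), first to $(\bar e+c)+(e+d)$ and then to $(c+d)+(e+\bar e)$, give $c+d\in S$. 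Two more, to $(e+a)+(c+d)$ and then to $(a+c)+(e+d)$, give $a+c\in S$, and likewise $b+c\in S$. Hence $s_i-t_j=(a+c)-(b+c)$ with $c$ fresh.
\item \emph{Measure.} Induct on the number of occurrences of the original basis elements in the positive terms. Each move lowers this by one, and the fresh elements never create overlaps.
\item \emph{Availability.} While $\sum_i s_i$ is not nicely reduced, there are $e\sim e'$ (possibly equal) in two different positive terms. Since $x$ is nicely reduced, one of them occurs positively more often than its coefficient in $x$, so it also occurs in some $t_j$, and the move applies.
\item \emph{Conclusion.} At the end, $\Sigma=\sum_i s_i$ is nicely reduced, and $T=\sum_j t_j=\Sigma-x$ has nonnegative coefficients, so it is nicely reduced too. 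All partial sums are then nicely reduced, and repeated use of (2) gives $\Sigma,T\in S$. Finally $x+T=\Sigma$ and (2) give $x\in S$.
\end{itemize}
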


Proof of  Theorem $3.2$,We take $F$ to be the free abelian group generated by the set $B$, as defined in $3.1$ . Define
a relation `$\sim$' on B as $(J,\omega_J)\sim (I,\omega_I)$ if $I=J$. Then it is an equivalence relation.
\\
Let $S\subset G$ be as in $3.1$. In view of the above lemma, it is enough to show that the
three conditions in Lemma $3.3$ are satisfied. Condition $(i)$ is clear, almost from the definition.
The addition and subtraction principles will yield condition $(ii)$.  Finally,
applying the moving lemma \cite[Corollary 2.4]{BR3} , it is clear that $(iii)$ is also satisfied.

\section{Euler class of stably free module $P$}
Let $A$ be a smooth affine algebra over $\overline{F_p}$, of dimension $d\geq 3$.Take n be an integer such that 2n$\geq d+2$. We shall assign the stably free module P of rank n to its 'n-th Euler class' group and show that P has a unimodular element if and only if it's Euler class is trivial.

Let $ r\geq 1$ and $Um_{r,n+r} (A)$ be the set of all $r \times (n+r)$ matrices $\sigma$ in $M_{r,n+r}(A)$ which  has a right inverse, i.e.  there exists $\tau \in M_{n+r,r}(A)$ such that $\sigma\circ \tau=Id_r.$
For any $\sigma \in Um_{r,n+r}(A),$ we have an exact sequence 
$$ 0 \rightarrow A^r \rightarrow^\sigma\, A^{n+r} \rightarrow P \rightarrow 0 $$ 
where $\sigma(v) = v\sigma$ for $v \in A^{r}$ and $P$ is a stably free $A$-module of rank $n.$ Hence every element of $Um_{r,n+r} (A)$ corresponds to a stably free $A$-module of rank $n$ and conversely, any stably free $A$-module $P$ of rank $n$ will give rise to an element of $Um_{r,n+r}(A)$ for some $r.$ We will define a map 
$$e:Um_{r,n+r}(A) \rightarrow E^n(A)$$ which is a natural generalization of the map $Um_{n+1}(A) \rightarrow E^n (A)$ defined in \cite{BRS}.
Let $\sigma$  be an element of $Um_{r,n+r}(\CA),$ then 
\[
\sigma=
  \begin{bmatrix}
    a_{1,1} & \cdots & a_{1,n+r} \\
    \vdots  &  \cdots & \vdots \\
    a_{r,1} & \cdots & a_{r,n+r} \\
  \end{bmatrix}
\]
Let $e_1,\ldots,e_{n+r}$ be the standard basis of $A^{n+r}$ and let 
$$ P = A^{n+r}/\left(\sum^{n+r} _{i=1} a_{1,i}e_i,\ldots, \sum _{i=1} ^{n+r} a_{r,i}e_i \right)A. $$
Let  $p_1,\ldots,p_{n+r}$  be the images of  $e_1,\ldots,e_{n+r}$ respectively in $P.$ Then 
$$ P = \sum _{i=1} ^{n+r} A p_i  ~\mbox{ with relations } ~\sum_{i=1}^{n+r} a_{1,i}p_i = 0 , \ldots ,\sum_{i=1}^{n+r} a_{r,i}p_i = 0 .$$
To the triple $(P,(p_1,\cdots,p_{n+r}),\sigma),$ we associate an element $e(P,(p_1,\cdots,p_{n+r}),\sigma)$ of $E^n(A)$ as follows:
Let $\lambda :P \surj J$ be a generic surjection, i.e. $J \subset A$ is an ideal of height $n.$ Since $P \oplus A^r = A^{n+r}$ and dim $A/J = d-n \leq n-2$, we get $P/JP$ is a free $A/J $ module of rank $n$, by Bass \cite{bass}. Since $J/J^2$ is surjective image of $P/JP,$ $J/J^2$ is generated by $n$ elements.
Let ``bar" denote reduction modulo $J$. By Bass \cite{bass} , there exist $\Theta \in E_{n+r}(\ol {A})$ such that $$[\ol{a_{1,1}} ,\ldots ,\ol{a_{1,n+r}}]\Theta = [1,0,\ldots,0]$$ i.e.
the first row of $\Theta ^{-1}$ is $[\ol{a_{1,1}} ,\ldots ,\ol{a_{1,n+r}}]$. Let $\ol{\sigma}\circ\Theta$ be given by 
\[
\ol \sigma\circ \Theta=
  \begin{bmatrix}
    1       & 0 &0 &0   \\
   \ol{b_{2,1}} & \ol{b_{2,2}} &\cdots & \ol{b_{2,n+r}} \\
   \vdots  &  \vdots &  \vdots & \vdots \\
   \ol{b_{r,1}} & \ol{b_{r,2}} &\cdots & \ol{b_{r,n+r}} 
  \end{bmatrix}.
\]
Note that $[\ol{b_{2,2}} ,\ldots , \ol{b_{2,n+r}}] \in Um_{n+(r-1)} (\ol {A}).$ By Bass \cite{bass},  there exist $\Theta_1 \in E_{n+(r-1)}(\ol {A})$ such that $$[\ol{b_{2,2}} ,\ldots ,\ol{b_{2,n+r}}]\Theta_1 = [1,0,\ldots,0].$$ If $\Phi \in E_m(A)$, then  $
\begin{bmatrix}
Id_t &0 \\
0 &\Phi \\
\end{bmatrix}
\in E_{m+t}(A)$.
Let
\[
\ol{\sigma}\circ \Theta\circ \Theta_1 =
 \begin{bmatrix}
    1       & 0 &0 &0   \\
   \ol{b_{2,1}} & 1&\cdots & 0\\
    \ol{b_{3,1}}& \ol{c_{3,2}} &\cdots & \ol{c_{3,n+r}} \\
   \vdots  &  \vdots &  \vdots & \vdots \\
   \ol{b_{r,1}} & \ol{c_{r,2}} &\cdots & \ol{c_{r,n+r}} \\
  \end{bmatrix}
\]
Counting in this way, we get $\tilde{\Theta} \in E_{n+r} (\ol {A})$ such that 
\[
\ol{\sigma}\circ \tilde{\Theta} =
 \begin{bmatrix}
    1       & 0 &0 &0 &\cdots &0   \\
   \ol{b_{2,1}} & 1 &0 &0  &\cdots & 0\\
    \ol{b_{3,1}}& \ol{c_{3,2}} &1 &0 &\cdots & 0 \\
   \vdots  &  \vdots &  \vdots & \vdots \\
   \ol{b_{r,1}} & \ol{c_{r,2}}&\cdots & \ol{d_{r,r-1}} &1 &\cdots & 0 \\
  \end{bmatrix}.
\]
We can find elementary matrix $\Psi \in E_{n+r} (\ol { A})$ such that 
$$\ol{\sigma}\circ \tilde{\Theta} \circ \Psi = [Id_r,\underline{0}]$$
 where $\underline{0}$ is $r \times n$ zero matrix. 
Let $\Delta = (\tilde{\Theta}\circ \Psi)^{-1} \in E_{n+r}(\ol {A}),$ then $\ol{\sigma}$ is the first $r$ rows of $\Delta$, i.e, $\ol{\sigma}$ can be completed to an elementary matrix $\Delta.$ Since
$$  \sum_{i=1}^{n+r} a_{1,i}p_i = 0 , \ldots ,\sum_{i=1}^{n+r} a_{r,i}p_i = 0 .$$ we get $$ \Delta [\ol{p_1},\ldots,\ol{p_{n+r}}]^{t} = [0,\ldots,0,\ol{q_1},\ldots,\ol{q_n}]^t, $$ where $t$ stands for transpose.
Thus $(\ol{q_1},\ldots,\ol{q_n})$ is a basis of the free module $P/JP.$ 

Let $\omega_J : (\CA/J)^n \surj J/J^2$
 be the surjection given by the set of generators of $\ol{\lambda(q_1)},\cdots,\ol{\lambda(q_n)}$ of $J/J^2.$
We define $$e(P,(p_1,\cdots,p_{n+r}),\sigma) = (J,\omega_J) \in E^n(\CR).$$ 
We need to show that $e(P,(p_1,\cdots,p_{n+r}),\sigma)$ is independent of the choice of the elementary completion of $\ol{\sigma}$ and the choice of the generic surjection $\lambda.$

We begin with the following result which shows that  $e(P,(p_1,\cdots,p_{n+r}),\sigma)$ is independent of the choice of the elementary completion $\ol{\sigma}$.

\begin{lemma} \label{2.1}
Suppose $\Gamma \in E_{n+r}(\ol {\CA})$ is chosen so that  its first $r$ rows are $\ol{\sigma}$. Let  $ \Gamma [\ol{p_1},\ldots,\ol{p_{n+r}}]^{t} = [0,\ldots,0,\ol{q_1'},\ldots,\ol{q_n'}]^t.$ Then  there exist $\Psi \in E_n(\ol {\CA})$ such that $\Psi[\ol{q_1},\ldots,\ol{q_n}]^t = [\ol{q_1'},\ldots,\ol{q_n'}].$
\end{lemma}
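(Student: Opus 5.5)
The idea is to compare the two elementary completions directly. Their "quotient" has a block-triangular shape whose lower-right block carries $(\ol{q_1},\ldots,\ol{q_n})$ to $(\ol{q_1'},\ldots,\ol{q_n'})$. It then remains to show, by injective stability for $K_1$ over the low-dimensional ring $\ol{\CA}=\CA/J$, that this block is elementary.

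\textbf{Step 1 (block shape).} Let $\Delta\in E_{n+r}(\ol{\CA})$ be the completion used in the definition and put $\Lambda=\Gamma\Delta^{-1}\in E_{n+r}(\ol{\CA})$. Since $\Gamma$ and $\Delta$ have the same first $r$ rows, namely $\ol{\sigma}$, we get $e_i\Gamma=e_i\Delta$ for $i\le r$, so $e_i\Lambda=e_i$. Hence
\[
\Lambda=\begin{bmatrix} Id_r & 0\\ \gamma & \beta\end{bmatrix}
\]
with $\gamma\in M_{n,r}(\ol{\CA})$ and $\beta\in M_n(\ol{\CA})$. Because $\Lambda$ is invertible, $\beta\in GL_n(\ol{\CA})$. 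Applying $\Lambda$ to $\Delta[\ol{p_1},\ldots,\ol{p_{n+r}}]^t=[0,\ldots,0,\ol{q_1},\ldots,\ol{q_n}]^t$ gives
\[
\Gamma[\ol{p_1},\ldots,\ol{p_{n+r}}]^t=[0,\ldots,0,\ol{q_1'},\ldots,\ol{q_n'}]^t ,
\]
and the block shape of $\Lambda$ yields $[\ol{q_1'},\ldots,\ol{q_n'}]^t=\beta[\ol{q_1},\ldots,\ol{q_n}]^t$. The $\gamma$ block contributes nothing, since it multiplies the zero entries. It therefore suffices to take $\Psi=\beta$ and show $\beta\in E_n(\ol{\CA})$.

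\textbf{Step 2 (remove $\gamma$).} We can factor
\[
\Lambda=\begin{bmatrix} Id_r&0\\ \gamma\beta^{-1}\cdot 0+\gamma & Id_n\end{bmatrix}\begin{bmatrix} Id_r&0\\0&\beta\end{bmatrix},
\]
that is, $\Lambda=\begin{bmatrix} Id_r&0\\ \gamma&Id_n\end{bmatrix}\begin{bmatrix} Id_r&0\\0&\beta\end{bmatrix}$. The first factor is a product of elementary matrices. Hence $\mathrm{diag}(Id_r,\beta)\in E_{n+r}(\ol{\CA})$, so $\beta$ lies in $GL_n(\ol{\CA})\cap E_{n+r}(\ol{\CA})$.

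\textbf{Step 3 (injective stability).} We have $\dim\ol{\CA}=\dim \CA/J\le d-n\le n-2$ by $2n\ge d+2$. Also $n\ge 3$, since $d\ge3$ forces $2n\ge 5$. So $n\ge\max(3,\dim\ol{\CA}+2)$. By the injective stability theorem for $K_1$ (Bass, Vaserstein), the map $GL_n(\ol{\CA})/E_n(\ol{\CA})\to GL_{n+r}(\ol{\CA})/E_{n+r}(\ol{\CA})$ is injective for $n\ge \dim\ol{\CA}+2$. Equivalently, $GL_n(\ol{\CA})\cap E_{n+r}(\ol{\CA})=E_n(\ol{\CA})$. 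Thus $\beta\in E_n(\ol{\CA})$, which proves the lemma.

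\textbf{Main obstacle.} The only non-formal input is Step 3. One must check that the stability range genuinely applies: $\ol{\CA}$ is a Noetherian ring of Krull dimension at most $n-2$, and $n\ge 3$ is needed for the normality of $E_n$ in the stability statement. These hypotheses are where $2n\ge d+2$ and $d\ge 3$ are used. If one prefers to avoid quoting injective stability in full generality, one can instead use the sharper bound $\dim\ol{\CA}\le n-2$ together with Vaserstein's form $sr(\ol{\CA})\le \dim\ol{\CA}+1\le n-1$. The latter gives injectivity of $K_1$ already from rank $sr+1\le n$.
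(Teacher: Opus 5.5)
Your proof is correct and follows the same route as the paper: form $\Gamma\Delta^{-1}\in E_{n+r}(\ol{\CA})$, note that its first $r$ rows are $[Id_r,\underline{0}]$, take $\Psi$ to be the lower-right $n\times n$ block, and conclude $\Psi\in E_n(\ol{\CA})$ by injective stability for $K_1$ (Suslin--Vaserstein), since $n\ge \dim\ol{\CA}+2$. Your Step 2 splits off the unipotent factor $\begin{bmatrix} Id_r&0\\ \gamma&Id_n\end{bmatrix}$ to get $\mathrm{diag}(Id_r,\beta)\in E_{n+r}(\ol{\CA})$; this spells out a step the paper leaves implicit when it asserts $\Psi\in SL_n(\ol{\CA})\cap E_{n+r}(\ol{\CA})$.
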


\begin{proof}
The Matrix $\Gamma \circ \Delta ^{-1} \in E_{n+r}(\ol {\CA})$ is such that its first $r$ rows are $[Id_r,\underline{0}].$ Therefore, there exists $\Psi \in SL_n(\ol {\CA}) \cap E_{n+r}(\ol {\CA})$ such that $\Psi[\ol{q_1},\ldots,\ol{q_n}]^t = [\ol{q_1'},\ldots,\ol{q_n'}].$ 
Since $n > dim \ol {\CA} +1$, by Suslin-Vaserstein \cite{SV}, $\Psi \in E_n(\ol {\CA}).$
$\hfill\square$
\end{proof}
\medskip

Let $\tilde{\omega_J} : (\CA/J)^n \surj J/J^2$ be the surjection given by the set of generators $\ol{\lambda(q_1')},\cdots,\ol{\lambda(q_n')}$ of $J/J^2.$
Then, by (\ref{2.1}), $(J,\omega_J) = (J,\tilde{\omega_J})\in E^n (\CR).$ Thus for a given surjection $\lambda :P \surj J,$ the element $e(P,(p_1,\cdots,p_{n+r}),\sigma)$ is independent of the choice of the elementary completion of $\ol{\sigma}.$

Now we have to show that $e(P,(p_1,\cdots,p_{n+r}),\sigma)$ is independent of the choice of the generic surjection $\lambda.$ In other words, we can state as following:
\begin{theorem}
 
$\lambda' :P \surj J'$ is another generic surjection, where $J'$ is an ideal of $\CA$ of height $n$ and  $\omega_J' : (\CA/J')^n \surj J'/J'^2$ is a surjection obtained as above by completion of $\sigma$ modulo $J'$ to an element of $E_{n+r}(\CA/J'),$ then  $(J,\omega_J) = (J',\omega_J')\in E^n (\CA).$ 

\end{theorem}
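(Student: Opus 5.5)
We follow the standard homotopy argument of Bhatwadekar--Sridharan (as in \cite{BRS} for unimodular rows) and adapt it to the matrix $\sigma$. Consider the polynomial ring $\CA[T]$. Since $\lambda$ and $\lambda'$ are generic surjections, we may replace $\lambda'$ by a general position perturbation and assume $J+J'=\CA$. Then consider
$$\Lambda = (1-T)\lambda + T\lambda' : P[T]\to \CA[T].$$
By a general position argument (Bhatwadekar--Raja Sridharan style, using $\dim \CA[T]=d+1$ and $2n\ge d+2$), we can modify $\Lambda$ by adding $T(1-T)\mu$, with $\mu\in P^*[T]$ suitably chosen, so that $\Lambda(P[T])=K$ is an ideal of height $n$ in $\CA[T]$. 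Moreover, $K(0)=J$ and $K(1)=J'$ by construction.

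Next we carry the orientation along. Since $P[T]\oplus \CA[T]^r=\CA[T]^{n+r}$ and $\dim \CA[T]/K\le d+1-n\le n-1$, Bass's stability (the form for $E_{n+r}$ acting on unimodular rows over a ring of dimension $\le n-1$) lets us repeat the construction of Section 4 over $\CA[T]/K$. This yields $\Delta_T\in E_{n+r}(\CA[T]/K)$ with first $r$ rows $\ol\sigma$, hence a basis $\ol{Q_1},\dots,\ol{Q_n}$ of $P[T]/KP[T]$ and a surjection $\omega_K:(\CA[T]/K)^n\surj K/K^2$ induced by $\Lambda$. Specializing at $T=0$ and $T=1$ gives elementary completions of $\ol\sigma$ modulo $J$ and $J'$. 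By Lemma \ref{2.1}, the induced orientations coincide with $\omega_J$ and $\omega_{J'}$ in $E^n(\CA)$, since by Suslin--Vaserstein any two such orientations differ by $E_n$.

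The key step is then a homotopy invariance statement: if $(K,\omega_K)$ is an element of $E^n(\CA[T])$ (defined analogously), then $(K(0),\omega_K(0))=(K(1),\omega_K(1))$ in $E^n(\CA)$. To obtain it, we use the moving lemma \cite[Corollary 2.4]{BR3} in $\CA[T]$ to write $(K,\omega_K)+(K_1,\omega_{K_1})=0$ with $K_1$ chosen so that $K_1(0)$ and $K_1(1)$ are of height $n$ and comaximal with $J$ and $J'$. Then we need to show that the element $(K_1,\omega_{K_1})+(K,\omega_K)$, whose combined orientation lifts to a surjection $\CA[T]^n\surj K\cap K_1$, specializes to liftable elements at $T=0,1$. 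Specialization of a surjective lift is immediate. The remaining obstacle is relating $(K_1(0),\omega)$ and $(K_1(1),\omega)$. For this I would instead choose $K_1=K_1'\CA[T]$ extended from $\CA$, which is possible by a generic choice of the complementary surjection via the subtraction principle. With that choice, $K_1(0)=K_1(1)$ with the same orientation, and subtracting gives $(J,\omega_J)=(J',\omega_{J'})$ by the addition and subtraction principles used in Theorem 3.2.

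The main obstacle is this last step: arranging that the residual ideal is extended from $\CA$, or equivalently proving a homotopy-invariance/Quillen-patching statement for $E^n$ over $\overline{\mathbb{F}}_p$-algebras in the range $2n\ge d+2$. If that fails, I would fall back on the Mandal--Bhatwadekar ``$\CA[T]$ with monic'' trick. The idea is to pass to $\CA(T)$ or invert a monic, so that the ideals become comparable. One would then use that $K/K^2$ generated by $n$ elements, plus a monic polynomial in $K$, forces a surjective lift.
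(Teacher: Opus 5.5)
Your first two steps match the paper's proof. These are the homotopy $\lambda(T)\colon P[T]\surj I$ (your $K$) with $\hh I=n$, $I(0)=J$, $I(1)=J'$, for which the paper cites \cite{BRS}. Then comes an elementary completion of $\ol\sigma$ over $\CA[T]/I$, whose specializations at $T=0,1$ give $\omega_J$ and $\omega_{J'}$ up to $E_n$ by Lemma \ref{2.1}. Your preliminary reduction to $J+J'=\CA$ is unnecessary. As phrased it is also circular, unless you compare both $\lambda$ and $\lambda'$ with a third surjection.

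The gap is the step you yourself flag, and the mechanism you propose for it does not work. The moving lemma in $\CA[T]$ produces a residual ideal $K_1\subset\CA[T]$ with no reason to be extended from $\CA$. Comparing $K_1(0)$ with $K_1(1)$ is then the original problem again. Making the residual ideal extended is not a general-position matter that a ``generic choice'' plus the subtraction principle could achieve. It amounts to lifting an orientation over $\CA[T]$ given a lift at $T=0$, which is the real content of the statement. Besides, the addition and subtraction principles are only available over $\CA$, where $2n\ge d+2$. Over $\CA[T]$, which has dimension $d+1$, this range is not guaranteed.

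The missing ingredient is a Nori-type lifting theorem over $\CA[T]$, applied after choosing the residual ideal in $\CA$ first.

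\begin{enumerate}
\item Since $\hh(I\cap\CA)\ge n-1$, you have $\dim\CA/(I\cap\CA)\le d-n+1\le n-1$. So the moving lemma in $\CA$ gives $K\subset\CA$ of height $n$ with $K+(I\cap\CA)=\CA$ and $J\cap K=(a_1,\ldots,a_n)$, the $a_i$ inducing $\omega_J$ and some $\omega_K$.
\item Then $I+K\CA[T]=\CA[T]$, so $L=I\cap K\CA[T]$ is an ideal of height $n$. It carries an orientation $\omega_L$ built from $\omega_I$ and $\omega_K\otimes\CA[T]$.
\item After modifying $\omega_L$ by an element of $E_n(\CA[T]/L)$, $\omega_L(0)$ lifts to $L(0)=J\cap K=(a_1,\ldots,a_n)$.
\item The result \cite[Corollary 5.3]{sbmkd}, which the paper itself uses at the end of the proof of Theorem \ref{main61}, then lifts $\omega_L$ to a surjection $\CA[T]^n\surj L$.
\item Setting $T=1$ gives $(J',\omega_{J'})+(K,\omega_K)=0$, hence $(J,\omega_J)=-(K,\omega_K)=(J',\omega_{J'})$.
\end{enumerate}

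This is what the paper's final sentence, asserting an ideal $K$ of $\CA$ comaximal with $J$ and $J'$, is standing in for. Note that it is comaximality with $I\cap\CA$, not just with $J$ and $J'$, that makes $L/L^2$ split. Your monic-polynomial fallback is not needed.
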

\begin{proof}
[By lemma 5.1,BRS]There exists an ideal $I$ of $A[T]$ of height n and a surjection $\lambda[T]:P[T]\surj I$ such that $\lambda(0)= 0$ and $\lambda(1)=\lambda'$
Take $N= (I\cap A)^2$
Then $ n-1 \leq ht(N) \leq n$ and by our assumtion $dim(A/N)\geq 2$ . 
Then by Suslin $Sr (A/N)\leq dim (A/N)\leq d-n+1 \leq n+1$
Hence,there exists $\Delta \in E_{n+r}(\ol {\CA})$ such that $\Delta [\ol(p_1),\cdots,\ol(p_{n+r})]^t = [0,0,\cdots,\ol(q_1),\ol(q_2),\cdots,\ol(q_n)]^t$
Where $(\ol(q_1),\ol(q_2),\cdots,\ol(q_n))$ forms a basis of the free $(A/N)$ module $(P/NP)$ and also a basis of free $A[T]/I$ module $P[T]/IP[T]$ .
Therefore as earlier,we obtain [${\ol\lambda(T)(\ol{q_1},\cdots,\ol\lambda(T)(\ol{q_n})}$] as a set of generators of $I/I^2$ and setting $T=0$ and $T=1$ we ontain generators of $J/J^2$ and $J'/J'^2$ respectively and hence the corresponding surjections $\omega_J' : (\CA/J')^n \surj J'/J'^2$ and $\omega_J' : (\CA/J')^n \surj J'/J'^2$.
There exists $K\subset R$ of $htn$ and comaximal with $J$ and $J'$ and surjection $\omega_k:(CA/K)^n\surj K/K^2$ such that $(J,\omega_j) + (K,\omega_k) = (J',\omega_J') + (K,\omega_K)$ Hence,the assignement is well defined.

\end{proof}

\begin{theorem}
 Suppose that there exists $\epsilon' \in E_{n+1}(A)$ with  $\ol \epsilon'\ol e_1^T=\ol v^T$ such that $\ol p \ol \epsilon' =(\ol 0, \ol u'_1,...,\ol u'_n)$. Let $\omega'_J$ be a surjection induced by $\lambda$, sending each $\ol u'_i \to \ol {\lambda (u'_i)}$, for $i=1,...,n$. Then  $(J,\omega_J)=(J,\omega'_J)$. 
\end{theorem}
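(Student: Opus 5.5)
The plan is to repeat the argument of Lemma~\ref{2.1} in the transposed (column) formulation used here, for $r=1$. Let $\epsilon\in E_{n+1}(A)$ be the matrix used in the definition of $(J,\omega_J)$. Thus $\ol\epsilon\,\ol e_1^T=\ol v^T$ and $\ol p\,\ol\epsilon=(\ol 0,\ol u_1,\dots,\ol u_n)$, and $\omega_J$ is given by the generators $\ol{\lambda(u_1)},\dots,\ol{\lambda(u_n)}$ of $J/J^2$. The idea is to compare $\epsilon$ and $\epsilon'$ through the matrix
$$\Gamma:=\ol\epsilon^{-1}\ol\epsilon'\in E_{n+1}(A/J).$$

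First I check what $\Gamma$ does to $\ol e_1^T$. Both $\ol\epsilon$ and $\ol\epsilon'$ send $\ol e_1^T$ to $\ol v^T$, so $\Gamma\,\ol e_1^T=\ol e_1^T$. Hence the first column of $\Gamma$ is $e_1^T$, and $\Gamma$ has the block form
$$\Gamma=\begin{bmatrix}1&\ast\\ 0&\Psi\end{bmatrix},\qquad \Psi\in M_n(A/J).$$
Since $\det\Gamma=1$, we have $\Psi\in SL_n(A/J)$. Moreover $\Psi$ is stably elementary, because $\Gamma\in E_{n+1}(A/J)$ and the $\ast$ row can be cleared by elementary column operations.

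Next I compute the effect on the generators:
$$(\ol 0,\ol u'_1,\dots,\ol u'_n)=\ol p\,\ol\epsilon'=(\ol p\,\ol\epsilon)\,\Gamma=(\ol 0,\ol u_1,\dots,\ol u_n)\,\Gamma .$$
The first coordinate of $(\ol 0,\ol u)$ is zero, so the $\ast$ row of $\Gamma$ contributes nothing. This gives $(\ol u'_1,\dots,\ol u'_n)=(\ol u_1,\dots,\ol u_n)\Psi$. Applying $\ol\lambda$, the generating set defining $\omega'_J$ is the image under $\Psi$ of the generating set defining $\omega_J$, that is, $\omega'_J=\omega_J\circ\Psi$.

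The step needing care is showing that $\Psi$ lies in $E_n(A/J)$, not merely in $SL_n(A/J)\cap E_{n+1}(A/J)$. Here I use $\dim A/J=d-n\le n-2$, so that $n\ge \dim(A/J)+2$. By the Suslin--Vaserstein stability result \cite{SV}, as in the proof of Lemma~\ref{2.1}, $SL_n(A/J)\cap E_{n+1}(A/J)=E_n(A/J)$, so $\Psi\in E_n(A/J)$. Therefore $\omega_J$ and $\omega'_J$ differ by an elementary automorphism of $(A/J)^n$. They lie in the same equivalence class, and $(J,\omega_J)=(J,\omega'_J)$ already in $G$, hence in $E^n(A)$.
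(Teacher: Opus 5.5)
Your argument is correct and is essentially the paper's route. The paper deduces the theorem from the lemma that $(\ol u_1,\dots,\ol u_n)\theta=(\ol u'_1,\dots,\ol u'_n)$ for some $\theta\in E_n(A/J)$, and your comparison matrix $\Gamma=\ol\epsilon^{-1}\ol\epsilon'$ combined with Suslin--Vaserstein injective stability is exactly the proof of Lemma~\ref{2.1} specialized to $r=1$; the stability result applies since $n\ge \dim(A/J)+2$ and $n\ge 3$. In doing so you supply the proof of that lemma, which the paper states but leaves unproved.
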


Theorem follows from the following lemma.
\begin{lemma}
Suppose that there exists $ \epsilon'\in E_{n+1}(A)$ with $\ol \epsilon'\ol e_1^T=\ol v^T$ such that  $\ol p \ol \epsilon' =(\ol 0, \ol u'_1,...,\ol u'_n)$. Then there exists $\theta \in E_n(A/J)$ such that $(\ol u_1,...,\ol u_n)\theta =(\ol u'_1,...,\ol u'_n)$.
\end{lemma}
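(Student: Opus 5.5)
The plan is to repeat the argument of Lemma \ref{2.1} in the case $r=1$, with columns in place of rows. Here $\ol p$ denotes the row of images of the generators modulo $J$, and $\ol v$ is the unimodular row defining the module. The unprimed data come from an $\epsilon\in E_{n+1}(A)$ with $\ol\epsilon\,\ol e_1^T=\ol v^T$ and $\ol p\,\ol\epsilon=(\ol 0,\ol u_1,\dots,\ol u_n)$.

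First I will compare the two completions through the matrix $\Gamma=\ol\epsilon^{-1}\ol\epsilon'\in E_{n+1}(A/J)$. Since $\ol\epsilon\,\ol e_1^T=\ol v^T=\ol\epsilon'\,\ol e_1^T$, we get $\Gamma\,\ol e_1^T=\ol e_1^T$. So the first column of $\Gamma$ is $\ol e_1^T$, and $\Gamma$ has the block form
\[
\Gamma=\begin{bmatrix} 1 & w\\ 0 & \Psi\end{bmatrix},\qquad w\in (A/J)^n,\ \Psi\in M_n(A/J).
\]
Because $\det\Gamma=1$, we have $\det\Psi=1$, so $\Psi\in SL_n(A/J)$.

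Next I will compute
\[
(\ol 0,\ol u'_1,\dots,\ol u'_n)=\ol p\,\ol\epsilon'=\ol p\,\ol\epsilon\,\Gamma=(\ol 0,\ol u_1,\dots,\ol u_n)\Gamma .
\]
The leading zero kills the row $w$, so this equals $(\ol 0,(\ol u_1,\dots,\ol u_n)\Psi)$. Hence $(\ol u_1,\dots,\ol u_n)\Psi=(\ol u'_1,\dots,\ol u'_n)$, and it remains to show that $\Psi\in E_n(A/J)$.

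For this I factor
\[
\Gamma=\begin{bmatrix}1&0\\0&\Psi\end{bmatrix}\begin{bmatrix}1&w\\0&\mathrm{Id}_n\end{bmatrix}.
\]
The second factor is a product of elementary matrices, so $\mathrm{diag}(1,\Psi)\in E_{n+1}(A/J)$. Thus $\Psi$ lies in $SL_n(A/J)\cap E_{n+1}(A/J)$, which is the stable kernel of $SL_n\to K_1$.

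The key step, and the only place the hypotheses enter, is injective stability for $K_1$. We have $\dim A/J=d-n$, and $2n\ge d+2$ gives $d-n\le n-2$, that is, $n\ge\dim(A/J)+2$. In this range the theorem of Suslin--Vaserstein \cite{SV}, used exactly as in Lemma \ref{2.1}, gives $SL_n(A/J)\cap E_{n+1}(A/J)=E_n(A/J)$. Taking $\theta=\Psi$ then proves the lemma.

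The main point to check carefully is the convention: the statement multiplies on the right by rows, whereas Lemma \ref{2.1} acts on columns. I will confirm that the fixed vector is a column fixed by $\Gamma$. This forces the zero block to sit below the $1$, which is exactly what makes the $w$-part drop out after left multiplication by the row $(\ol 0,\ol u)$.
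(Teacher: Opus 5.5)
Your proposal is correct and follows essentially the paper's route. The paper states this lemma without a separate proof, and your argument is the proof of Lemma~\ref{2.1} transposed to the $r=1$, column-completion setting: you compare the two completions via $\ol\epsilon^{-1}\ol\epsilon'$, extract $\Psi\in SL_n(A/J)\cap E_{n+1}(A/J)$ with $(\ol u_1,\dots,\ol u_n)\Psi=(\ol u'_1,\dots,\ol u'_n)$, and conclude $\Psi\in E_n(A/J)$ by Suslin--Vaserstein since $\dim A/J\le d-n\le n-2$; your explicit factorization of $\Gamma$ just spells out the step Lemma~\ref{2.1} leaves implicit when it asserts that the lower block lies in $SL_n\cap E_{n+r}$.
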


\par Hence the assignment of $(J,\omega_J)$ associated to $(P,v,p)$ is well defined and we shall denote it by $e(v)\in E^n(A)$. From the definition of $E^n(A)$ it follows that, for any two unimodular rows $u,v\in \Um_{n+1}(A)$, if $u\sim_{E_{n+1}(A)}  v$, then $e(v)=e(u)$. Therefore, we obtain a map $$e:\Um_{n+1}(A)/E_{n+1}(A)\to E^n(A).$$


\begin{theorem} \label{main61}
Let $A$ be a ring of dimension $d$ containing a field $k$ and $n$ be an integer with
 $2n \geq d+2$. Suppose $\sigma$ and P be defined as before. Then P has a unimodular element if and only if $e(P)=0$ in $E^n(A)$.
\end{theorem}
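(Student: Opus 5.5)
We prove the two implications separately. Throughout we take the setting in which $E^n(A)$ and the Euler class $e(P)=e(P,(p_1,\ldots,p_{n+r}),\sigma)$ were constructed: $A$ is a smooth affine algebra over $\overline{\mathbb F}_p$ of dimension $d\ge 3$ with $2n\ge d+2$. The key tools are the well-definedness of $e(P)$ (Lemma \ref{2.1} and the theorem following it) and Theorem 3.2, which lifts $\omega_I$ to a surjection $A^n\surj I$ whenever $(I,\omega_I)=0$. The overall plan is the standard Bhatwadekar--Sridharan argument: a unimodular element lets us choose a generic surjection whose Euler class is visibly trivial, and conversely a trivial Euler class produces a lift which we convert back into a surjection $P\surj A$.

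\emph{($\Rightarrow$)} Suppose $P=Q\oplus A$. We choose a generic surjection $\lambda:P\surj J$ of the special form $\lambda=(\mu,0)$, where $\mu:Q\surj J$ is generic. Then $J$ is the image of a rank $n-1$ module $Q$, while $\hh J=n$; this forces $J=A$, i.e.\ $e(P)$ is represented by the empty ideal and so is $0$. The remaining point is to make this legitimate in the definition, which requires $\hh J=n$. To handle it, we take instead $\lambda=(\mu,a)$ with $a$ chosen so that $J=\mu(Q)+aA$ has height $n$. Using $\lambda$ we produce a surjection $\theta:A^n\surj J$ lifting $\omega_J$: first complete $\ol\sigma$ modulo $J$ compatibly with the basis of $\ol Q\oplus \ol A$, then use the fact that $Q\oplus A$ is stably free, so after elementary moves the lift exists. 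Alternatively, and as our first attempt, we apply the subtraction principle to show $(J,\omega_J)\in H$.

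\emph{($\Leftarrow$)} Suppose $e(P)=0$. By well-definedness, $(J,\omega_J)=0$ for the chosen generic $\lambda:P\surj J$. By Theorem 3.2, $\omega_J$ lifts to $\theta:A^n\surj J$ with $\theta(e_i)\equiv\lambda(q_i)$ modulo $J^2$. We then have two surjections $\lambda:P\surj J$ and $\theta:A^n\surj J$ that agree modulo $J^2$ via the isomorphism $P/JP\cong(A/J)^n$ given by the basis $\ol q_i$. This isomorphism comes from the elementary completion $\Delta$ of $\ol\sigma$, so it lifts to an isomorphism compatible with $P\oplus A^r\cong A^{n+r}$ up to $E_{n+r}$.

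The plan is to apply the ``subtraction principle'' comparing $P$ with the free module $A^n$. Concretely, we pass to $A[T]$ and use the homotopy $\lambda(T)$ from the proof of independence of $\lambda$, reducing to showing that $P$ acquires a surjection onto an ideal $I$ comaximal with $J$ such that $P\surj I$ sits together with $\lambda$. Equivalently, we want $P\surj J\cap K$ with $K=A$. Following Bhatwadekar--Sridharan, we first move to $P\oplus A\surj A[T]$-type ideals, using $psr(A)\le n$ in the range $2n\ge d+2$ to cancel. This yields a surjection $P\surj A$.

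The main obstacle is this last step: transporting the equality of $\lambda$ and $\theta$ modulo $J^2$ into a unimodular element of $P$. This requires a version of the subtraction principle for stably free $P$ (rather than free $P$) over $\overline{\mathbb F}_p$-algebras. We expect to handle it by noting that $P\oplus A^r$ is free, so applying the known free-module subtraction principle to $P\oplus A^r$, together with $E_{n+r}$-equivalence of the completions and Suslin--Vaserstein injectivity $SL_n\cap E_{n+r}=E_n$ (as in Lemma \ref{2.1}), should suffice. The final cancellation of $A^r$ then uses that the stable range of $A/J$ is small.
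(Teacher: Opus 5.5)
Your proposal has genuine gaps in both directions. In each direction, the step that carries the mathematical content is asserted rather than supplied.

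\textbf{($\Rightarrow$)} Your first attempt $\lambda=(\mu,0)$ does not compute $e(P)$. Here $\lambda(P)=\mu(Q)$ is the image of a rank $n-1$ module, so it is never a height-$n$ ideal as the definition requires, and for generic $\mu$ it is a proper ideal of height $n-1$, not $A$. Take instead $\lambda=(\mu,c)$ on $P=Q\oplus Au$, with $c=\lambda(u)$, $N=\mu(Q)$ of height $n-1$ and $J=N+Ac$ of height $n$. Then your sentence ``after elementary moves the lift exists'' is exactly what must be proved. The paper does it in two steps.
\begin{enumerate}
\item It works modulo $N$, not modulo $J$. Since $\dim A/N\le d-n+1$, $\ol\sigma$ has an elementary completion over $A/N$, giving a basis $\ol q_1,\dots,\ol q_n$ of $P/NP$. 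The coordinate row of $\ol u$ is unimodular of length $n$ over $A/N$, and by the stable-range bound for $A/N$ it is elementarily equivalent to $e_1$. So the basis can be moved to $(\ol u,\ol t_2,\dots,\ol t_n)$ with $\ol t_i\in Q/NQ$. Hence $N=(b_2,\dots,b_n)+N^2$ and $J=(c,b_2,\dots,b_n)+J^2$, with these generators representing $e(P)$.
\item Mohan Kumar's lemma gives $e\in N^2$ with $e(1-e)\in(b_2,\dots,b_n)$ and $N=(b_2,\dots,b_n,e)$. Then $J=(b_2,\dots,b_n,e+(1-e)c)$, which is an honest lift because $e(1-c)\in J^2$.
\end{enumerate}
Working only modulo $J$, as you suggest, never produces the $n-1$ generators of $N/N^2$ that this trick needs. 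Your fallback ``apply the subtraction principle'' names no ideal to subtract.

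\textbf{($\Leftarrow$)} You rightly see that what is needed is a subtraction principle for $P$ with $K=A$, but the mechanism you propose fails.
\begin{itemize}
\item The bound $psr(A)\le n$ does not follow from $2n\ge d+2$; in general one only has $psr(A)\le d+1$. If it did hold, cancellation would make every stably free module of rank $n$ free, and the theorem would be vacuous.
\item The free-module subtraction principle is about surjections from $A^n$ onto height-$n$ ideals. Applied to the rank $n+r$ free module $P\oplus A^r$, it carries no Euler-class information.
\item The small stable range of $A/J$ only gives cancellation modulo $J$, while the obstruction is global.
\end{itemize}
The paper instead argues by a homotopy, in these steps.
\begin{enumerate}
\item From the lift $J=(b_1,\dots,b_n)$ it uses Eisenbud--Evans to get $d_i=b_i+c_ib_n$ for $i<n$ and $d_n=b_n$, with $\hh(d_1,\dots,d_{n-1})=n-1$.
\item It sets $I=(d_1,\dots,d_{n-1},d_n+T)\subset A[T]$, so that $I(0)=J$ and $\dim A[T]/I=d-n+1$.
\item It takes a basis of $P[T]/IP[T]$ from an elementary completion, corrected by a lift of an element of $E_n(A/J)$. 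This is arranged so that the surjection $\theta:P[T]/IP[T]\surj I/I^2$ sending the basis to $d_1,\dots,d_{n-1},d_n+T$ restricts to $\ol\lambda$ at $T=0$.
\item It lifts $\theta$ to a surjection $\Theta:P[T]\surj I$ agreeing with $\lambda$ at $T=0$, via \cite[Corollary 5.3]{sbmkd}. This Nori-type homotopy lifting result is where smoothness, $\ol{\mathbb F}_p$ and $2n\ge d+2$ actually enter.
\item Finally it specialises $T\mapsto 1-d_n$, where $I$ becomes the unit ideal, to get $P\surj A$.
\end{enumerate}
This homotopy ideal and the Banerjee--Das lifting theorem are the ingredients missing from your plan.
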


\begin{proof}
First we assume that $P$ has a unimodular element,say $ u\in Um(P)$. Therefore,we have $P= Q\oplus Au$. Let $\lambda:P\surj J$ be a surjection,where $J$ be an ideal if height n. By \cite[Corollary 2.13 ]{SMBB3}without loss of generality we may assume that, ht$(\lambda(Q))=n-1$. Let $N=\lambda(Q)$ and bar denote going modulo $N$. Since, $n+1\geq d-n+3=dim(A/N)+2$ ,then by elementary transposition described in previous theorem 
We can find elementary matrix $\Delta \in E_{n+r} (\ol { A})$ such that $$ \Delta [\ol{p_1},\ldots,\ol{p_{n+r}}]^{t} = [0,\ldots,0,\ol{q_1},\ldots,\ol{q_n}]^t, $$ where $t$ stands for transpose.
Thus $(\ol{q_1},\ldots,\ol{q_n})$ is a basis of the free $A/N$ module $P/NP.$
Let $\ol{u}=\sum_{i=1}^{n}\ol{ a_{i}}\ol{q_i}$
Since $\ol{u}$ is a unimodular element of the free $A/N$ module $P/NP$,we have $(\ol{a_1},\ol{a_2},..\ol{a_n})\in Um_n(A/N)$. Since $dim(A/N)=d-n+1\geq 2$ We have, $n\geq dim(A/N)+1\geq sr(A/N)+1.$
This implies that $$(\ol{a_1},\ol{a_2},....,\ol{a_n}) \sim E_{n+r}(A/N)(1,0,0,....0)$$
Hence,$(\ol{q_1},\ol{q_2},...\ol{q_n})$ can be taken by elementary automorphism to a basis $(\ol{u},\ol{t_2},...,\ol{tn})$ where $\ol{t_i} \in Q/NQ$, for $i=2,...,n$. Let Let $\ol \lambda(t_i)=\ol b_i$, for $i=2,...,n$. Therefore, we get $J=(b_2,...,b_n,c)+J^2$ and $N=(b_2,...,b_n)+N^2$, where $c=\lambda(u)$. Then by \cite{NMKL} there exists $e\in N^2$ such that $e(1-e)\in (b_2,...,b_n)$. Let $N=(b_2,...,b_n,e)$ and $J=(N,c)=(b_2,...,b_n,e+(1-e)c)$. Therefore, we obtain a lift of $J=(b_2,...,b_n,c)+J^2$, which implies that $e(v)=0$ in $E^n(A)$.

Conversely, we assume that $e(\sigma)=0$ in $E^n(A)$.Let $\lambda:P\surj J$,be a surjection,where $J\subset A$ be an ideal of height $n$. let bar denote going modulo J. Since$n+1 \geq dim(A/J)+2$ there exist $\alpha \in E^n(A)$ such that $$ \Delta [\ol{p_1},\ldots,\ol{p_{n+r}}]^{t} = [0,\ldots,0,\ol{q_1},\ldots,\ol{q_n}]^t, $$ where $t$ stands for transpose.Thus $(\ol{q_1},\ldots,\ol{q_n})$ is a basis of the free module $P/JP$. Hence we get $ J=(\lambda(q_1),\lambda(q_2),...\lambda(q_n))+J^2 $
 Since $e(\sigma)=0$ we can have a lift for $J$.
 we can obtain $J=(b_1,b_2,...,b_n)$ with $\lambda(q_i)-b_i \in J^2$ for $i=1,2,..n$.
Using E-E,we can find $c_1,c_2,..,c_{n-1} \in A $ such that $ht(b_1+c_1b_n,b_2+c_2b_n,...,b_{n-1}+c{n_1}b_n)=n-1$  
\par Let $d_i=b_i+c_ib_n$ for $i=1,2,..,n-1$ and $d_n=b_n$
\par Let $I=(d_1,d_2,...d_n+T)$ and 'tilde' denote going modulo $I$.Then we have $P[T]=(A[T])/(p_1,p_2,...p_(n+r))$.(check the definition of P[T]) We see that 
$\dim (A[T]/I)=\dim(A/(d_1,d_2,...d_(n-1))=d-n+1\leq(n-1)\leq(n+1)$ and hence we can find $\tau(T)\in E_(n+r)(A[T])$ such that .Therefore,$[\widetilde(p_1),..\widetilde{P_(n+r)}]\widetilde{\tau(T)}=[0,0,..\widetilde{q_1(T)},...\widetilde{q_n(T)}]$ where $\widetilde{q_1(T)},...\widetilde{q_n(T)}]$ forms a basis of the free $A[T]/I$ module $P[T]/IP[T]$. Evaluating at $T=0$ we get$[\widetilde(p_1),..\widetilde{P_(n+r)}]\widetilde{\tau(0)}=[0,0,..\widetilde{q_1(0)},...\widetilde{q_n(0)}]$ with $\tau(0) \in E_(n+r)(A)$.
\par Therefore by theorem $8.6$ We can find $\theta_1 \in E_n(A/J)$ such that,$(\ol{q_1},..\ol{q_n})=(\ol{q_1(0)},...\ol{q_n(0)})\theta_2$
\par Note that $\ol{\lambda(q_i+c_iq_n)}=\ol{d_i}$ for all $i=1,...,n-1$ and $\ol{\lambda(q_n)}=\ol{d_n}$. Also note that $A/J\cong (A[T]/I)/(t)$, where $t=\widetilde{T}$. Therefore, the map $A[T]/I\surj A/J$ is surjective and so and so is $E_{n}(A[T]/I)\surj E_{n}(A/J)$. Hence we can find $\tau(T) \in E_n(A[T])$, which is a lift of $\theta_2$. Let 
$$(\widetilde{q_1(T)},...,\widetilde{q_n(T)})\widetilde{\tau(T)}=(\widetilde{w_1(T)},...,\widetilde{w_n(T)}).$$
Since $(\widetilde{q_1(T)},...,\widetilde{q_n(T)})$ is a basis of the free $A[T]/I$-module $P[T]/IP[T]$, $\{\widetilde{w_1(T)},...,\widetilde{w_n(T)}\}$ is also a basis. Define a surjection $\theta:P[T]/IP[T]\surj I/I^2 $, sending $\widetilde{w_i(T)}\to {d_i}$ for $i=1,...,n-1$ and $\widetilde{w_n(T)}\to {d_n+T}$. Then $\widetilde{\tau(0)}=\theta_2$, implies that $\theta(0)=\ol \lambda$. Since we have $d\ge n+2$ this gives us  $\dim(A[T]/I)+1\le d-n+2\le n$, by Corollary \cite[5.3]{sbmkd} there exists a surjective lift $\Theta$ of $\theta :P[T]\surj  I$ which matches at $T=0$. Since $T+d_n\in I$, setting $T=1-d_n$, we obtain a surjection from $\gamma=\Theta(1-d_n): P\surj A$, which completes the proof.\qed

\end{proof}

\Addresses

\bibliographystyle{abbrv}


\begin{thebibliography}{10}
	
	
	
	\bibitem{bass}
	H.~{B}ass.
	\newblock Algebraic $K$-theory.
	\newblock W. A. Benjamin Inc. 1968.
	
	\bibitem{BR}
	S.~M. Bhatwadekar and R.~Sridharan.
	\newblock Projective generation of curves in polynomial extensions of an affine
	domain and a question of {N}ori.
	\newblock {\em Inventiones Mathematicae}, 133(1):161--192, jun 1998.
	
	\bibitem{BRS}
	S.~M. Bhatwadekar and R.~Sridharan.
	\newblock Zero cycles and the {E}uler class groups of smooth real affine
	varieties.
	\newblock {\em Inventiones Mathematicae}, 136(2):287--322, apr 1999
	\bibitem{BR3}
	S.~M. Bhatwadekar and R.~Sridharan.
	\newblock On {E}uler classes and stably free projective modules.
	\newblock In {\em Algebra, arithmetic and geometry, {P}art {I}, {II} ({M}umbai,
		2000)}, volume~16 of {\em Tata Inst. Fund. Res. Stud. Math.}, pages 139--158.
	Tata Inst. Fund. Res., Bombay, 2002.
	\bibitem{soumimanoj}
	M.~K. Keshari and Soumi.~Tikader
	\newblock On a question of Moshe Roitman and Euler class of stably free module.
	\newblock In{\em Journal of Algebra 614 (2023) 848–866}
	
	\bibitem{MPHIL}
	M.~K. Keshari.
	\newblock Euler class group of a {N}oetherian ring.
	\newblock {\em M-Phil Thesis-2001 (52 page).}, Aug. 2001.
	
	\bibitem{SMBB3}
	S.~M. Bhatwadekar and R.~Sridharan.
	\newblock The {E}uler class group of a {N}oetherian ring.
	\newblock {\em Compositio Mathematica}, 122(2):183--222, 2000.
	
	\bibitem{NMKL}
	N.~Mohan~Kumar.
	\newblock Complete intersections.
	\newblock {\em Journal of Mathematics of Kyoto University}, 17(3):533--538,
	1977.
	
	\bibitem{sbmkd}
	S.~Banerjee. and Mrinal.~Kanti.~Das.
	\newblock On the question of nori;Obstructions,improvement and applications.
	\newblock {\em Journal of Algebra}, 635:271--299, 2023.
	
	\bibitem{SV}
	L.~N. Vaser{\v{s}}te{\u{\i}}n and A.~A. Suslin.
	\newblock {S}erre's problem on projective modules over polynomial rings, and
	algebraic {K}-theory.
	\newblock {\em Mathematics of the {USSR}-Izvestiya}, 10(5):937--1001, oct 1976.
	\end{thebibliography}

\end{document}